\definecolor{Burgundy}{RGB}{144,0,32}
\newtheorem{tm}{Theorem}[section]
\newtheorem{pr}[tm]{Proposition}
\newtheorem{lm}[tm]{Lemma}
\newtheorem{df}[tm]{Definition}
\newtheorem{rem}[tm]{Remark}
\newcommand{\R}{{\mathbb R}}
\newcommand{\Q}{{\mathbb Q}}
\newcommand{\C}{{\mathbb C}}
\newcommand{\Z}{{\mathbb Z}}
\newcommand{\z}{{\zeta}}
\newcommand{\oset}[3][0ex]{%
  \mathrel{\mathop{#3}\limits^{
    \vbox to#1{\kern-2\ex@
    \hbox{$\scriptstyle#2$}\vss}}}}
\begin{document}

\pagestyle{plain}
\title{Distribution of values of higher derivatives of $L'(s,\chi)/L(s,\chi)$}
\author{Samprit Ghosh}
\address{Mathematical Sciences 468, University of Calgary, \newline 2500 University Drive NW, Calgary, Alberta, T2N 1N4, Canada. }
\email{samprit.ghosh@ucalgary.ca}
\thanks{}
\subjclass[2020]{11M06, 11M41, 11R42}
\keywords{Dirichlet L-functions, density function, log derivatives, Li coefficients}
\begin{abstract}
	In this article, we study the value distribution theory for the first derivative of the logarithmic derivative of Dirichlet $L$-functions, generalizing certain results of Ihara, Matsumoto et al. related to ``$M$-functions'' for $\sigma = \operatorname{Re}(s) > 1$. We then discuss the main obstruction toward generalization to higher derivatives.
\end{abstract}
\maketitle
\tableofcontents
\section{Introduction}
Let $K$ be either $\Q$ or an imaginary quadratic number field. In particular, $K$ has exactly one archimedean prime, denoted by $\wp_{\infty}$ (say). Let $\chi$ run over all Dirichlet characters on $K$ whose conductor (the non-archimedean part) is a prime divisor and that satisfy $ \chi(\wp_{\infty}) = 1$.

The average of a complex-valued function $\phi(\chi)$ over a family of $\chi$ as defined above is taken as follows:
$$\text{Avg}_\chi \phi(\chi) = \lim_{m \rightarrow \infty } \; \text{Avg}_{N(\textbf{f}) \leq m} \phi(\chi),$$
where $$ \text{Avg}_{N(\textbf{f}) \leq m} \phi(\chi) = \dfrac{\sum_{N(\textbf{f}) \leq m} \; \left( \sum_{\textbf{f}_\chi = \textbf{f}} \; \phi(\chi)  \right) \; / \; \sum_{\textbf{f}_\chi = \textbf{f}} \; 1 }{\sum_{N(\textbf{f}) \leq m} \; 1}.$$
For the above setting, the following distribution theorem was proved by Ihara in \cite{iharaM}.
\begin{tm}\label{ihdist} \textbf{(Ihara)}
	For $K$ as above and for $\sigma = \operatorname{Re}(s) >1$, there exists a real-valued $C^{\infty}$ function $M_{\sigma} : \C \rightarrow \mathbb{R}$ satisfying $M_{\sigma} (w) \geq 0$ and $ \; \int_{\C} M_{\sigma}(w) \; |dw| = 1$, such that
	\begin{equation}
		\text{Avg}_\chi \; \Phi \left(\frac{L'(\chi, s)}{L(\chi, s)} \right) \; = \; \int_\C M_{\sigma}(w) \; \Phi(w) \; |dw|,
	\end{equation}
	holds for any continuous function $\Phi$ on $\C$. Moreover,
	$$	\text{Avg}_{\chi} \; \psi_z \left(\frac{L'(\chi, s)}{L(\chi, s)} \right) \; = \tilde{M}_{\sigma}(z),$$
	where $\tilde{M}_{\sigma}(z)$ is obtained from the Fourier transform of $M_{\sigma}(z)$ in the sense that 
	$$ \tilde{M}_{\sigma}(z) = \int_\C M_{\sigma}(w) \; \psi_z(w) \; |dw|.$$ 
	Here $\psi_z : \C \rightarrow \C^1$ is the additive character $\psi_z (w) = \exp(i \cdot \operatorname{Re}(\overline{z} w))$.
\end{tm}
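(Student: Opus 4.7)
The plan is to follow Ihara's Euler-product strategy: compute the averaged Fourier transform $\tilde{M}_\sigma(z)$ explicitly as an infinite product over primes, invert it to recover $M_\sigma(w)$, and finally extend the identity from the additive characters $\psi_z$ to arbitrary continuous $\Phi$.

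For $\sigma = \text{Re}(s) > 1$ I would begin with the absolutely convergent Dirichlet expansion
$$\frac{L'(\chi,s)}{L(\chi,s)} \;=\; -\sum_\mathfrak{p} \frac{\chi(\mathfrak{p}) \log N\mathfrak{p}}{N\mathfrak{p}^s - \chi(\mathfrak{p})} \;=:\; \sum_\mathfrak{p} g_s(\chi(\mathfrak{p}),\mathfrak{p}),$$
where $\mathfrak{p}$ ranges over non-archimedean prime divisors of $K$. Since $\psi_z$ is additive, one obtains the formal product expansion $\psi_z(L'/L) = \prod_\mathfrak{p} \psi_z(g_s(\chi(\mathfrak{p}),\mathfrak{p}))$. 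Orthogonality of Dirichlet characters implies that, as the conductor $\mathbf{f}$ varies over prime divisors with $N(\mathbf{f}) \to \infty$, the joint distribution of $(\chi(\mathfrak{p}_1),\ldots,\chi(\mathfrak{p}_r))$ on any fixed finite set of primes approaches the Haar measure on $(S^1)^r$ (the constraint $\chi(\wp_\infty)=1$ plays no role since only non-archimedean conductors are averaged). Thus, formally,
$$\tilde{M}_\sigma(z) \;=\; \prod_\mathfrak{p} M_{s,\mathfrak{p}}(z), \qquad M_{s,\mathfrak{p}}(z) := \int_0^{2\pi} \psi_z\bigl(g_s(e^{i\theta},\mathfrak{p})\bigr)\,\frac{d\theta}{2\pi},$$
and by rotation-invariance of $d\theta$ one checks that $M_{s,\mathfrak{p}}(z)$ depends only on $\sigma$, not on $\text{Im}(s)$. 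A Taylor expansion gives $M_{\sigma,\mathfrak{p}}(z) = 1 + O(|z|^2 \log^2 N\mathfrak{p}/N\mathfrak{p}^{2\sigma})$ (the linear term vanishes because $\int_0^{2\pi} e^{ik\theta} d\theta = 0$ for $k \geq 1$), so the product converges absolutely for $\sigma > 1/2$.

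To extract $M_\sigma(w)$ I would Fourier-invert: $M_\sigma(w) = (2\pi)^{-2}\int_\C \tilde{M}_\sigma(z)\,\overline{\psi_z(w)}\,|dz|$. The properties $M_\sigma \geq 0$ and $\int M_\sigma\,|dw|=1$ follow because $M_\sigma |dw|$ is realized as the weak limit of the probability measures induced by $\text{Avg}_\chi$. The $C^\infty$ smoothness requires $\tilde{M}_\sigma(z)$ to decay faster than any polynomial, which one obtains by exploiting oscillation in the local factors $M_{\sigma,\mathfrak{p}}(z)$ and multiplying the contributions of primes with $N\mathfrak{p}$ up to a threshold depending on $|z|$. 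Extending from the characters $\psi_z$ to arbitrary continuous $\Phi$ is then routine, via Fourier inversion together with tightness of the averaging measures.

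The main technical obstacle is rigorously justifying the exchange of $\text{Avg}_\chi$ with the infinite product over $\mathfrak{p}$. Concretely one truncates the prime expansion at $N\mathfrak{p}\leq T$, evaluates the finite averaged product exactly by orthogonality (which becomes legitimate once $N(\mathbf{f})$ exceeds the primes in play), bounds the tail uniformly in $\chi$ using absolute convergence of the Dirichlet series for $\sigma > 1$, and then sends $T\to\infty$. Quantitative control in both the tail estimate and the rate of equidistribution of the tuples $(\chi(\mathfrak{p}))$ is what ultimately delivers the decay of $\tilde{M}_\sigma$ needed for the smoothness of $M_\sigma$.
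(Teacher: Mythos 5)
The theorem you are addressing is stated in the paper only as a citation of Ihara's result \cite{iharaM}; the paper does not reprove it, so there is no internal proof to match sentence-for-sentence. The nearest thing is the paper's proof of its own main theorem (the analog for $\mathcal{L}'(s,\chi)$, Theorem \ref{dist1}), which starts from the same equidistribution input (Lemma \ref{uniformchi}) but then takes a genuinely different route from yours. Your sketch is the Euler-product/Fourier strategy of Ihara's original argument: pass immediately to $\tilde{M}_\sigma$, factor it over primes using additivity of $\psi_z$ together with equidistribution of the tuples $(\chi(\wp))_\wp$, verify the local Taylor estimate $M_{\sigma,\wp}(z)=1+O(|z|^2(\log N\wp)^2 N\wp^{-2\sigma})$ and absolute convergence of the product, and finally Fourier-invert. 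The paper instead constructs $M_{\sigma,P}$ directly as a pushforward density: it writes the truncated quantity as $g_{\sigma,P}(\chi_P N P^{-it})$, performs a polar change of variables $z=re^{i\theta}\mapsto w(z)$ on each local circle, computes the Jacobian explicitly, and reads off the one-prime density $M_{\sigma,\wp}$ (a delta-type measure on a closed curve); $M_{\sigma,P}$ is then a finite convolution over $\wp\in P$, and $M_\sigma$ is obtained as the uniform limit as $P=P_y$, $y\to\infty$. The Fourier-dual identity $\tilde{M}_\sigma(z)=\text{Avg}_\chi\,\psi_z(\cdot)$ emerges only at the very end as the special case $\Phi=\psi_z$. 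The trade-offs are real: in your route the normalization $\int M_\sigma=1$ and the dual identity come for free, but the entire technical weight falls on proving rapid decay of $\tilde{M}_\sigma(z)$ as $|z|\to\infty$, which you need both to justify $L^1$ Fourier inversion and to deliver the claimed $C^\infty$ regularity, and your proposal only gestures at this without carrying it out; in the paper's route one gets a concrete density and avoids any Fourier decay estimate, but each local factor $M_{\sigma,\wp}$ is a singular measure, so smoothness of the limit must be argued separately from the convolution structure (the paper in fact establishes only continuity at this stage). One further caution: you dismiss the constraint $\chi(\wp_\infty)=1$ as playing ``no role,'' but it restricts the family to a proper sub-family (even characters when $K=\Q$), and the uniform distribution of $\chi_P$ on $T_P$ for precisely this sub-family is the nontrivial content of Lemma \ref{uniformchi}; it does not follow from naive orthogonality of all Dirichlet characters of a fixed modulus.
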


\begin{rem}
	Note that Ihara proves this more generally. He considers certain function fields of one variable over a finite field (the theorem is true in this case for $\sigma >3/4$). He also treats the case $K = \Q$ where $\chi$ runs over characters of the form $N(\wp)^{- \tau i}$, and the case where $K$ is a number field having more than one archimedean prime and $\chi$ runs over all ``normalized unramified Gr\"ossencharacters'' of $K$, modifying the definition of average accordingly.
\end{rem}
\begin{rem}
	In a later paper \cite{iharamatsu}, Ihara and Matsumoto proved the above theorem for $\sigma \geq \frac{1}{2}+\epsilon$, under GRH and with ``mild'' conditions on the test function; namely, either $\Phi(w) \ll e^{a|w|}$ holds for some $a>0$, or $\Phi$ is the characteristic function of either a compact subset or the complement of such a subset.
\end{rem}

A similar distribution result for real characters was also proved by Murty and Mourtada; see \cite{mariam1}. For a fundamental discriminant $D$ and the associated real character $\chi_D$, define
$$N(y) := \{ |D| \leq y \; : \; D \text{ is a fundamental discriminant} \}.$$
Then the following theorem holds.
\begin{tm} \textbf{(Mourtada, Murty)}
	Let $\sigma > \frac{1}{2}$ and assume GRH. Then there exists a density function $\mathcal{Q}_{\sigma}(x)$ such that, for any bounded continuous function $\Phi$ on $\R$,
	$$\lim_{y \rightarrow \infty} \frac{1}{N(y)}\sum \limits_{\substack{ |D| \leq y \\ D \text{ fund. disc.}}} \Phi \left( \frac{L'(\sigma, \chi_D)}{L(\sigma, \chi_D)}\right) = \frac{1}{\sqrt{2 \pi}} \int_{ - \infty}^{\infty} \mathcal{Q}_{\sigma}(x) \Phi(x) dx.$$
	It also holds when $\Phi$ is the characteristic function of either a compact subset of $\R$ or the complement of such a subset.
\end{tm}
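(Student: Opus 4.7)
The plan is to adapt the Fourier-theoretic strategy underlying Ihara's argument (Theorem \ref{ihdist}) to the one-dimensional family of real quadratic characters. First I would compute the ``characteristic function'' of the empirical distribution,
$$M_{\sigma,y}(z) \;:=\; \frac{1}{N(y)} \sum_{\substack{|D|\le y\\ D\text{ fund. disc.}}} \exp\!\left( iz \cdot \frac{L'(\sigma,\chi_D)}{L(\sigma,\chi_D)} \right),$$
and show that it converges, as $y\to\infty$, to a limit $\widetilde{\mathcal{Q}}_\sigma(z)$. The density $\mathcal{Q}_\sigma$ would then be recovered as the Fourier inverse of $\widetilde{\mathcal{Q}}_\sigma$, and the claimed identity verified first for the additive characters $\Phi=\psi_z(x)=e^{izx}$ before being extended to general $\Phi$.

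The essential analytic input is a GRH-conditional approximation
$$\frac{L'(\sigma,\chi_D)}{L(\sigma,\chi_D)} \;=\; -\sum_{n\le X} \frac{\Lambda(n)\,\chi_D(n)}{n^\sigma} \;+\; E(\sigma,\chi_D,X),$$
with the error controlled in quadratic mean over $D$ and with $X$ taken to grow like a small power of $\log y$. Bounds of this shape go back to Selberg in the zeta-function case and extend to Dirichlet $L$-functions under GRH. The first step is therefore to make this truncation effective, reducing the problem to averages of short Dirichlet polynomials in $\chi_D$.

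Next I would decompose the exponential of the truncated sum into a local product
$$\exp\!\Big(-iz\sum_{n\le X} \Lambda(n)\chi_D(n)/n^\sigma\Big) \;=\; \prod_{p\le X}\exp\!\left( \frac{-iz\,\chi_D(p)\log p}{p^\sigma-\chi_D(p)} \right) \;+\; \text{(tail)},$$
and average each factor over $D$. The key equidistribution statement is that, as $D$ ranges over fundamental discriminants with $|D|\le y$, the signs $(\chi_D(p))_{p\le X}$ become asymptotically independent and uniformly distributed in $\{\pm 1\}$; this follows from quadratic reciprocity together with a P\'olya--Vinogradov bound for character sums, provided $X$ grows slowly enough with $y$. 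The outcome is the explicit limit
$$\widetilde{\mathcal{Q}}_\sigma(z) \;=\; \prod_p \mathcal{M}_{\sigma,p}(z), \qquad \mathcal{M}_{\sigma,p}(z) := \tfrac{1}{2}\,e^{-iz\log p/(p^\sigma-1)} \;+\; \tfrac{1}{2}\,e^{iz\log p/(p^\sigma+1)},$$
and a Taylor expansion yields $\mathcal{M}_{\sigma,p}(z)=1-iz\log p/(p^{2\sigma}-1)+O_z\!\big((\log p)^2/p^{2\sigma}\big)$, which shows that the product converges absolutely precisely in the range $\sigma>1/2$ and defines a function with enough decay in $z$ for Fourier inversion.

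Finally I would upgrade from the additive characters $\psi_z$ to arbitrary bounded continuous $\Phi$ by a standard density argument, using a tail estimate (again a consequence of the GRH approximation) that bounds $L'(\sigma,\chi_D)/L(\sigma,\chi_D)$ outside a set of $D$ of negligible density. Extension to indicator functions of compact sets and their complements is then a Portmanteau-style approximation using the absolute continuity of $\mathcal{Q}_\sigma$. The main technical obstacle, in my view, is precisely the balance between the truncation length $X$ and the character-sum error: $X$ must grow fast enough that the truncated product averages close to $\widetilde{\mathcal{Q}}_\sigma(z)$ uniformly in $z$ on compact sets, yet slowly enough that the GRH-conditional error $E$ averages to zero. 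Managing this trade-off uniformly in $z$ is where most of the technical work should lie.
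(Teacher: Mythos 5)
The paper does not prove this theorem; it is stated as background and attributed to Mourtada and Murty, citing \cite{mariam1}, so there is no ``paper's proof'' to compare against. Taking your proposal on its own terms: the overall strategy---a GRH-conditional short Dirichlet polynomial approximation to $L'/L(\sigma,\chi_D)$, factorization of the characteristic function into local factors at primes $p\le X$, equidistribution of $\chi_D(p)$ over fundamental discriminants to identify each local mean, and then Fourier inversion---is indeed the correct framework and essentially the one that Mourtada and Murty use (itself an adaptation of the Ihara--Matsumoto approach to the real-character family).

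There is, however, a concrete error in your local factor. Over fundamental discriminants $|D|\le y$, the value $\chi_D(p)$ does not take only the values $\pm 1$ with probability $\tfrac12$ each: the event $p\mid D$ has positive limiting density, namely $\tfrac{1}{p+1}$ for odd $p$, and in that event $\chi_D(p)=0$ so the $p$-local contribution to $-\sum_n\Lambda(n)\chi_D(n)n^{-\sigma}$ vanishes entirely. The correct local mean is therefore
$$\mathcal{M}_{\sigma,p}(z) \;=\; \frac{1}{p+1}\;+\;\frac{p}{2(p+1)}\,e^{-iz\log p/(p^\sigma-1)}\;+\;\frac{p}{2(p+1)}\,e^{\,iz\log p/(p^\sigma+1)},$$
not the $\tfrac12$--$\tfrac12$ average you wrote. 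Your Taylor expansion and the resulting claim of absolute convergence of $\prod_p\mathcal{M}_{\sigma,p}$ for $\sigma>\tfrac12$ still go through with the corrected weights, since the linear-in-$z$ term is still of size $O(\log p/p^{2\sigma})$, so the conclusion is unaffected; but as written your product does not compute the limiting characteristic function of the family, and the density it would invert to is not $\mathcal{Q}_\sigma$. You also need to say something about $p=2$, where the fundamental-discriminant conditions produce slightly different local weights. The rest of the outline (balancing $X$ against the quadratic character equidistribution error, and passing from $\psi_z$ to general bounded continuous $\Phi$ and then to indicator functions via absolute continuity) is the right shape of argument.
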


Our goal is to prove similar distribution theorems for higher derivatives of the logarithmic derivative of Dirichlet $L$-functions. Such distribution results might be of independent interest. However, our key motivation for this work comes from Li's coefficients. For a number field $K$, Li in \cite{licoeff} introduced the sequence of numbers
\begin{equation}
	\lambda_n (K) = \frac{1}{(n-1)!} \frac{d^n}{ds^n} \left(s^{n-1} \log \xi_K(s) \right) \Big|_{s=1} \text{ \hspace{1cm} for } n \geq 1, 
\end{equation}
where $\xi_K(s)$ is the completed Dedekind zeta function of $K$. He proved the following result.

\textbf{Li's Criterion.} 
The generalized Riemann hypothesis for $\z_K(s)$ holds if and only if $\; \lambda_n (K) \geq 0$ for all $n \geq 1$. Later, Li's original criterion was extended to automorphic $L$-functions by Lagarias in \cite{lagariasli}. Brown in \cite{brownli} attempted to prove an effective version of Li's Criterion, showing that non-negativity of the first few $\lambda_i$'s gives zero-free regions of a certain shape around $s=1$. In particular, the single condition $\lambda_2(K) \geq 0$ should imply nonexistence of the exceptional \emph{Siegel zeros} (see \cite[Theorem 5]{brownli}). However, subsequent authors have noted certain errors in Brown's paper.

\textbf{Question.} The signs of individual Li coefficients are subtle and hard to determine. What about their average behavior? More precisely, for a fixed modulus $q$, what can be said about the Li coefficients averaged over Dirichlet characters $\chi$ modulo $q$?

The distribution results for higher logarithmic derivatives proved in this paper can be viewed as a step toward answering this question. In a recently submitted preprint \cite{ghosh2}, the author also studied, for Dirichlet $L$-functions, the $(a,b)$-th moments of the higher derivatives of $L'/L(s,\chi)$ at $s=1$, toward the same goal. The study of higher logarithmic derivatives at $s=1$ is also closely related to higher Euler--Kronecker constants of number fields; see \cite{ghosh1} for related results in that direction.
For the rest of the paper, we write $$\mathcal{L}(s,\chi) := L'(s,\chi) / L(s,\chi).$$
The main result of this article is stated below.
\begin{tm}
	For any $s \in \C$ with $\sigma = \operatorname{Re}(s) > 1$, there exists a real-valued $C^{\infty}$ function $M_{\sigma,1} : \C \rightarrow \mathbb{R}$ satisfying $M_{\sigma,1} (w) \geq 0$ and $ \; \int_{\C} M_{\sigma,1}(w) \; |dw| = 1$, such that
	\begin{equation}
		\text{Avg}_\chi \; \Phi(\mathcal{L}'(s,\chi)) \; = \; \int_\C M_{\sigma,1}(w) \; \Phi(w) \; |dw|,
	\end{equation}
	holds for any continuous function $\Phi$ on $\C$. 
\end{tm}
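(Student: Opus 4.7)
The plan is to emulate Ihara's proof of Theorem~\ref{ihdist}, using the absolutely convergent Dirichlet series
$$
\mathcal{L}'(s,\chi) \;=\; \sum_{n \geq 1} \Lambda(n)\,\log(n)\,\chi(n)\,n^{-s},
$$
valid for $\sigma > 1$ by termwise differentiation of $\mathcal{L}(s,\chi) = -\sum_n \Lambda(n)\chi(n)\,n^{-s}$. Structurally, the coefficients $-\Lambda(n)$ in Ihara's proof are now replaced by $\Lambda(n)\log(n)$; since $\sum_n \Lambda(n)\log(n)\,n^{-\sigma}$ still converges absolutely for $\sigma > 1$, the overall architecture of his argument is expected to transfer with only cosmetic modifications.

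First I would truncate at a parameter $y$, splitting $\mathcal{L}'(s,\chi) = \mathcal{L}'_y(s,\chi) + R_y(s,\chi)$, where $\mathcal{L}'_y$ keeps only the contribution of prime powers of primes $p \leq y$. Orthogonality of Dirichlet characters of prime conductor, as exploited in~\cite{iharaM}, implies that the tuple $(\chi(p))_{p \leq y}$ equidistributes on the torus $(S^1)^{\pi(y)}$ as $\chi$ varies in the prescribed family. This produces, for each $y$, a density $M_{\sigma,y,1}$ on $\C$ such that
$$
\text{Avg}_\chi\,\Phi(\mathcal{L}'_y(s,\chi)) \;=\; \int_\C M_{\sigma,y,1}(w)\,\Phi(w)\,|dw|
$$
for every continuous $\Phi$ of compact support.

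Next, I would pass to $y \to \infty$ on the Fourier side. By independence across primes,
$$
\widetilde{M}_{\sigma,y,1}(z) \;:=\; \text{Avg}_\chi\,\psi_z(\mathcal{L}'_y(s,\chi)) \;=\; \prod_{p \leq y} \mathcal{F}_p(s,z),
$$
where the local factors $\mathcal{F}_p(s,z)$ are explicit Bessel-type integrals over $S^1$ whose leading-order argument has magnitude $|z|\,(\log p)^2\, p^{-\sigma}$. Using the uniform bound $|J_0(x)| \ll \min(1,|x|^{-1/2})$ at each prime, I would extract a subexponential decay estimate of the form $\widetilde{M}_{\sigma,1}(z) \ll \exp(-c|z|^\alpha)$ for some $c,\alpha > 0$. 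Fourier inversion then defines
$$
M_{\sigma,1}(w) \;:=\; \frac{1}{(2\pi)^2}\int_\C \widetilde{M}_{\sigma,1}(z)\,\psi_{-z}(w)\,|dz|,
$$
which is automatically $C^\infty$, is real-valued by the symmetry $\widetilde{M}_{\sigma,1}(-z) = \overline{\widetilde{M}_{\sigma,1}(z)}$, is non-negative as a pointwise limit of honest densities, and integrates to $1$ because $\widetilde{M}_{\sigma,1}(0) = 1$.

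Finally, the uniform tail bound $|R_y(s,\chi)| \leq \sum_{n > y}\Lambda(n)\log(n)\,n^{-\sigma}$ tends to $0$ as $y \to \infty$ for $\sigma > 1$, independently of $\chi$; consequently the distribution of $\mathcal{L}'_y$ converges to that of $\mathcal{L}'$, yielding the claimed identity for any continuous $\Phi$. The principal obstacle I anticipate lies in the Fourier-decay step: with $\log p$ replaced by $(\log p)^2$ in the local factors, the Bessel asymptotics that underlie Ihara's tail estimate are shifted, and verifying that $\prod_p \mathcal{F}_p(s,z)$ still decays fast enough in $|z|$ to guarantee smoothness of $M_{\sigma,1}$ is the point that will require the most technical care.
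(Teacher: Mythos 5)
Your proposal takes a genuinely different route from the paper's. Both start from the same truncation (Lemma \ref{LP}) and the equidistribution of $(\chi(\wp))_{\wp \in P}$ on the torus (Lemma \ref{uniformchi}), but the paper then works entirely in real space: it constructs $M_{\sigma,\wp}$ explicitly as a singular measure supported on the curve $\theta \mapsto g_{\sigma,\wp}(e^{i\theta})$ by computing the Jacobian of $z \mapsto Az/(1-z)^2$ in polar coordinates, builds $M_{\sigma,P}$ by convolution, and passes to a uniform limit in Proposition \ref{Msigmauni}. You instead work on the Fourier side, writing $\widetilde{M}_{\sigma,P}$ as a product of local oscillatory integrals, aiming for subexponential decay, and then Fourier-inverting, which is closer in spirit to Ihara's original argument in \cite{iharaM}. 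Notably, your route, if completed, would actually deliver the $C^\infty$ smoothness asserted in the theorem, whereas the paper's Proposition \ref{Msigmauni} only establishes continuity of $M_\sigma$.

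However, the Fourier-decay step that you flag as ``the point that will require the most technical care'' is not a peripheral detail: it is the whole content of the proof of smoothness and even of the existence of $M_{\sigma,1}$ as an $L^1$ density. You assert a bound $\widetilde{M}_{\sigma,1}(z)\ll \exp(-c|z|^\alpha)$ but do not prove it, and the ingredients you cite do not obviously suffice: the local factor $\mathcal{F}_p(s,z)=\int_{S^1}\psi_z\bigl(g_{\sigma,p}(t_p)\bigr)\,d^*t_p$ is not a Bessel function $J_0$, since $g_{\sigma,p}(e^{i\theta})$ is a pure rotation only to leading order in $p^{-\sigma}$; relative to the $L'/L$ case the denominator $(1-t_p p^{-\sigma})^2$ is squared and the coefficient $\log p$ becomes $(\log p)^2$, so one must verify uniformly in $p$ that the correction terms do not spoil the oscillatory cancellation before the infinite product can be controlled. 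A second, smaller gap is the non-negativity claim: Fourier inversion of a decaying function is not automatically non-negative, and one must identify the resulting function with the weak-$*$ limit of the probability measures at finite $P$, which again hinges on the unproven decay estimate. As written, the proposal outlines a plausible program but does not close the argument at its essential step.
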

In Section \ref{Disthigh} we then discuss higher derivatives and key obstructions to generalizing this method to higher derivatives. We show that, for $m \geq 2$, distribution functions may still exist, but only for much larger values of $\sigma$, depending on $m$. We explicitly compute that, for $m = 2$, the distribution function $M_{\sigma, 2}$ for $\mathcal{L}^{(2)}(s,\chi)$ exists for $\sigma > 2.93$. 

\section{Distribution Functions: Some Background}
In this section, we present some background on distribution functions. The results presented are based on the paper \cite{jeswin}.

Let $\R^k$ be a $k$-dimensional Euclidean space, and let $\mathbf{x} = (x_1, \cdots, x_k)$ be a variable point.
\begin{df}
	A completely additive, non-negative set function $\phi(E)$ defined for all Borel sets $E$ in $\R^k$ and taking the value $1$ when $E = \R^k$ will be called a \emph{distribution function} in $\R^k$. 
\end{df}
\textbf{Notation.} An integral with respect to $\phi$ will be denoted by $$\displaystyle \int_E f(\textbf{x}) \phi(d \mathbf{x}),$$ and is to be understood in the Lebesgue-Radon (or Lebesgue-Stieltjes)
sense.
\begin{df}
	A set $E$ is called a \emph{continuity set} of $\phi$ if $\phi(E^\circ) = \phi( \overline{E})$, where $E^\circ$ denotes the set of all interior points of $E$ and $\overline{E}$ is the closure of $E$.
\end{df}
\begin{df}
	A sequence of distribution functions $\phi_n$ is said to be \emph{convergent} if there exists a distribution function $\phi$ such that $\phi_n(E) \rightarrow \phi(E)$ for all continuity sets $E$ of the limit function $\phi$, which is then unique. We will use the notation $\phi_n \rightarrow \phi$.
\end{df}
\begin{pr}
	A sequence of distribution functions $\{ \phi_n \}$ converges to a distribution function $\phi$ if and only if, for all bounded continuous functions $f$,
	$$\displaystyle \int_{\R^k} f(\textbf{x}) \phi_n(d \mathbf{x}) \rightarrow \displaystyle \int_{\R^k} f(\textbf{x}) \phi(d \mathbf{x}).$$
	Moreover, if $\phi_n \rightarrow \phi$ then, for every non-negative, continuous function $f$,
	$$\displaystyle \int_{\R^k} f(\textbf{x}) \phi (d \mathbf{x}) \leq \; \liminf  \displaystyle \int_{\R^k} f(\textbf{x}) \phi_n(d \mathbf{x}).$$
\end{pr}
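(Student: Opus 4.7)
The plan is to prove the three assertions separately: the ``only if'' direction of the main equivalence, the ``if'' direction, and the Fatou-type inequality at the end. This is a standard Portmanteau-style argument adapted to the Jessen--Wintner framework of distribution functions on $\R^k$.

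For the ``only if'' direction, I would assume $\phi_n \rightarrow \phi$ and take a bounded continuous $f$ with $|f|\leq M$. The goal is to approximate $f$ uniformly by simple functions of the form $s = \sum_{i=0}^{N-1} c_i \mathbf{1}_{E_i}$, where $E_i = f^{-1}([c_i,c_{i+1}))$ and $-M = c_0 < c_1 < \cdots < c_N = M+1$ is a partition of the range. The crucial observation is that since $\phi$ is a finite measure, the set of levels $t$ with $\phi(f^{-1}(\{t\})) > 0$ is at most countable (the level sets for distinct $t$ with positive $\phi$-mass are disjoint, and their total mass is bounded). Hence, for any prescribed mesh $\eta > 0$, one can choose the $c_i$ so that each $f^{-1}(\{c_i\})$ is $\phi$-null. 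Continuity of $f$ then gives $\partial E_i \subseteq f^{-1}(\{c_i\}) \cup f^{-1}(\{c_{i+1}\})$, so every $E_i$ is a continuity set of $\phi$, and by hypothesis $\phi_n(E_i) \to \phi(E_i)$. A routine $\e/3$ argument (uniform approximation of $f$ by $s$ on one hand, convergence of the integrals of $s$ on the other) delivers $\int_{\R^k} f(\mathbf{x})\, \phi_n(d\mathbf{x}) \to \int_{\R^k} f(\mathbf{x})\, \phi(d\mathbf{x})$.

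For the converse, suppose $\int f\, \phi_n(d\mathbf{x}) \to \int f\, \phi(d\mathbf{x})$ for every bounded continuous $f$, and let $E$ be a continuity set of $\phi$. I would sandwich $\mathbf{1}_E$ between two families of bounded continuous functions: for $\e>0$, put
\[
h_\e(\mathbf{x}) = \max\bigl(0,\, 1 - \e^{-1} d(\mathbf{x}, \R^k \setminus \overline{E})\bigr), \qquad g_\e(\mathbf{x}) = \max\bigl(0,\, 1 - \e^{-1} d(\mathbf{x}, E^\circ)\bigr),
\]
so that $h_\e \leq \mathbf{1}_E \leq g_\e$ pointwise. Dominated or monotone convergence under $\phi$ gives $\int h_\e\, \phi(d\mathbf{x}) \uparrow \phi(E^\circ)$ and $\int g_\e\, \phi(d\mathbf{x}) \downarrow \phi(\overline{E})$ as $\e \downarrow 0$. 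Applying the hypothesis to $h_\e$ and $g_\e$ and using the sandwich yields $\phi(E^\circ)\leq \liminf \phi_n(E) \leq \limsup \phi_n(E) \leq \phi(\overline{E})$; because $E$ is a continuity set both bounds coincide with $\phi(E)$, so $\phi_n(E) \to \phi(E)$.

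For the ``moreover'' part, let $f\geq 0$ be continuous and set $f_M = \min(f, M)$, which is bounded continuous. The first part applied to $f_M$, combined with the pointwise bound $f \geq f_M \geq 0$, gives $\liminf_n \int f\, \phi_n(d\mathbf{x}) \geq \liminf_n \int f_M\, \phi_n(d\mathbf{x}) = \int f_M\, \phi(d\mathbf{x})$ for every $M$; letting $M\to\infty$ and invoking the monotone convergence theorem for $\phi$ delivers the claimed inequality. The main obstacle is the ``only if'' direction: one must justify the selection of partition points avoiding atomic $\phi$-levels of $f$, and verify that the boundary inclusion above really forces each $E_i$ to be a continuity set. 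The remaining steps are standard sandwich and monotone-convergence manipulations.
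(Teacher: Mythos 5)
The paper itself offers no proof of this proposition; it is quoted as background from Jessen and Wintner \cite{jeswin}. Your proposal is the standard Portmanteau-type argument, and two of its three pieces are sound as written: in the ``only if'' direction, the selection of partition points $c_i$ avoiding the (at most countably many) levels $t$ with $\phi(f^{-1}(\{t\}))>0$, the inclusion $\overline{E_i}\setminus E_i^{\circ}\subseteq f^{-1}(\{c_i\})\cup f^{-1}(\{c_{i+1}\})$ forcing each $E_i$ to be a continuity set, and the $\varepsilon/3$ argument are all correct; and in the ``moreover'' part, truncating to $f_M=\min(f,M)$, applying the first part, and letting $M\to\infty$ by monotone convergence is exactly right.

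The one genuine defect is in the converse direction: the displayed functions do not satisfy the sandwich $h_\varepsilon\leq \mathbf{1}_E\leq g_\varepsilon$ on which the argument rests. Your $g_\varepsilon$ is built from $d(\mathbf{x},E^\circ)$; if $\mathbf{x}\in E\setminus\overline{E^\circ}$ (for instance if $E$ is a single point, or any set with points outside the closure of its interior), then $d(\mathbf{x},E^\circ)>0$, so $g_\varepsilon(\mathbf{x})=0<\mathbf{1}_E(\mathbf{x})$ for small $\varepsilon$, and in any case $\int g_\varepsilon\,\phi(d\mathbf{x})$ decreases to $\phi(\overline{E^\circ})$ rather than $\phi(\overline{E})$. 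Your $h_\varepsilon$ is inverted: $1-\varepsilon^{-1}d(\mathbf{x},\R^k\setminus\overline{E})$ equals $1$ at points of $\R^k\setminus\overline{E}$ (where the distance vanishes) and is nonpositive at points deep inside $E$, so $h_\varepsilon\leq\mathbf{1}_E$ fails off $\overline{E}$. The repair is routine: take $g_\varepsilon(\mathbf{x})=\max\bigl(0,\,1-\varepsilon^{-1}d(\mathbf{x},E)\bigr)$, which dominates $\mathbf{1}_E$ and decreases to $\mathbf{1}_{\overline{E}}$, and $h_\varepsilon(\mathbf{x})=\min\bigl(1,\,\varepsilon^{-1}d(\mathbf{x},\R^k\setminus E^\circ)\bigr)$, which satisfies $h_\varepsilon\leq\mathbf{1}_{E^\circ}\leq\mathbf{1}_E$ and increases to $\mathbf{1}_{E^\circ}$. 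With these substitutions the chain $\phi(E^\circ)\leq\liminf_n\phi_n(E)\leq\limsup_n\phi_n(E)\leq\phi(\overline{E})$ goes through, and the continuity-set hypothesis closes the argument exactly as you describe.
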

\begin{df}
	If $\phi_1$ and $\phi_2$ are two distribution functions, then we define a new distribution function as their convolution, as follows: 
	$$\phi_1 * \phi_2 (E) \; := \;  \int_{\R^k} \phi_1(E - \mathbf{x}) \phi_2(d \mathbf{x}), $$
	for every Borel set $E$. Here $E-\mathbf{x}$ denotes the set obtained from $E$ by the translation $-\mathbf{x}$. 
\end{df}
One can show that $\phi_1 * \phi_2 = \phi_2 * \phi_1$.
\begin{df}
	The spectrum $S = S(\phi)$ of a distribution function $\phi$ is the set of points $\mathbf{x} \in \R^k$ for which $\phi(E)>0$ for any set $E$ containing $\mathbf{x}$ as an interior point. We note that $S$ is always a non-empty closed set.
\end{df}
\begin{df}
	The point spectrum $P = P(\phi)$ is the set of points $x$ such that $ \phi(\{x\}) >0$. 
\end{df}
\begin{df}
	A distribution function is called continuous if $P(\phi)$ is empty, and is called absolutely continuous if $\phi(E) = 0$ for all Borel sets $E$ of measure $0$. 
\end{df}
The following criterion is from \cite{jeswin}.
\begin{pr} A distribution function $\phi$ is absolutely continuous if and only if there exists a Lebesgue integrable point function $D(x)$ in $\R$ such that 
	$$ \phi(E) = \int_E D(x) dx,$$
	for any Borel set $E$. We call $D(x)$ the density function of $\phi$.
\end{pr}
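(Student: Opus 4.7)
The plan is to handle the two directions separately. The ``if'' direction is immediate: given $\phi(E) = \int_E D\,dx$ with $D$ Lebesgue integrable (and necessarily $D \geq 0$ a.e.\ since $\phi$ is a nonnegative measure of total mass $1$), any Borel set $E$ of Lebesgue measure zero satisfies $\int_E D\,dx = 0$, whence $\phi(E) = 0$. This is exactly the absolute continuity of $\phi$ with respect to Lebesgue measure $\lambda$.

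For the ``only if'' direction I pass to the cumulative function $F(x) := \phi((-\infty, x])$ and invoke the classical theory of absolutely continuous functions on $\R$. First I strengthen the null-set hypothesis ``$\lambda(E) = 0 \Rightarrow \phi(E) = 0$'' to the sharper $(\e, \delta)$-form ``for every $\e > 0$ there exists $\delta > 0$ with $\phi(U) < \e$ whenever $\lambda(U) < \delta$'': if this failed, sets $U_n$ with $\lambda(U_n) < 2^{-n}$ and $\phi(U_n) \geq \e_0$ would, by Borel--Cantelli and reverse Fatou applied to the finite measure $\phi$, produce $V := \limsup_n U_n$ with $\lambda(V) = 0$ but $\phi(V) \geq \e_0$, contradicting the hypothesis. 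Applied to finite disjoint unions of half-open intervals with total length less than $\delta$, this shows $F$ is absolutely continuous on every compact interval in the classical sense. The Lebesgue--Vitali fundamental theorem of calculus then produces $D := F' \in L^1(\R)$ with $D \geq 0$ a.e., $\int_\R D\,dx = F(+\infty) - F(-\infty) = 1$, and $\phi((a,b]) = \int_{(a,b]} D\,dx$ for every $a < b$. The two finite Borel measures $\phi$ and $\nu(E) := \int_E D\,dx$ agree on the $\pi$-system of half-open intervals, and Dynkin's $\pi$-$\lambda$ lemma extends the equality to the full Borel $\sigma$-algebra of $\R$.

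The main obstacle is the Lebesgue--Vitali theorem itself, which bundles Lebesgue's monotonic differentiation theorem with the $L^1$ fundamental theorem of calculus and is a nontrivial piece of classical analysis. If one prefers to avoid it, the converse can be routed instead through the full Radon--Nikodym theorem (von Neumann's $L^2$-projection argument applied to $\phi + \lambda$) or through a Hahn-decomposition construction for the signed measures $\phi - c\lambda$ indexed by rational $c > 0$; either alternative reconstructs $D$ up to a Lebesgue-null set without passing through one-dimensional differentiation theory, and in each case the integrability and the integral-representation of $\phi$ follow from the same $\pi$-$\lambda$ uniqueness step.
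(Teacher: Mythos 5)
Your argument is correct, but note that the paper itself offers no proof of this proposition: it is imported verbatim from Jessen--Wintner with a citation, so there is nothing internal to compare against. Both directions of your proof are sound. The ``if'' direction is the trivial one. For the converse, your upgrade of the null-set condition to the $(\varepsilon,\delta)$ form via Borel--Cantelli and reverse Fatou (legitimate here because $\phi$ is a finite measure, so continuity from above applies to $\bigcup_{n\ge N}U_n$) is the standard equivalence, and the passage through the cumulative function $F(x)=\phi((-\infty,x])$, the Lebesgue--Vitali fundamental theorem, and the $\pi$-$\lambda$ uniqueness step is a complete classical proof of the one-dimensional Radon--Nikodym theorem for $\phi\ll\lambda$. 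One caveat worth flagging: the surrounding definitions in this section are all stated for distribution functions on $\R^k$, and Jessen--Wintner prove the result in that generality; the cumulative-function route is intrinsically one-dimensional, so if the intended statement is for $\R^k$ (the ``$\R$'' in the proposition may be a slip for ``$\R^k$''), you should fall back on the Radon--Nikodym alternative you sketch at the end --- von Neumann's $L^2$ argument applied to $\phi+\lambda$ works verbatim in any dimension, whereas the Lebesgue--Vitali route does not.
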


\section{Construction of $M_{\sigma, P}$ functions}\label{DistConst}

Let $P$ be any finite set of non-archimedean primes of $K$ and set $T_P := \prod_{\wp \in P} \C^1$, where $\C^1$ denotes $\{ z : |z| = 1\}$. The following lemma is due to Ihara \cite[Lemma 4.3.1]{iharaM}. For the sake of completion, we include a proof here.
\begin{lm} \label{uniformchi}
	Let $K$ be as above and $\chi$ run over a family as above, excluding characters for which $\mathbf{f}_\chi \in P$. For each such $\chi$, let $\chi_P = (\chi(\wp))_{\wp \in P} \in T_P$. Then $(\chi_P)_\chi$ is uniformly distributed on $T_P$. That is, for any continuous function $\Psi : T_P \rightarrow \C$ we have, 
	$$ \text{Avg}_\chi \; \Psi(\chi_P) = \int_{T_P} \Psi(t_P) \; d^*t_P,$$
	where $d^* t_{\wp} = (2 \pi i t_{\wp})^{-1}d t_{\wp}$ is the normalized Haar measure on the $t_{\wp}$-unit circle.
	\vspace{2mm}
\end{lm}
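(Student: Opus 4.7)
The plan is to prove the uniform distribution via the standard Weyl criterion. Since $T_P = \prod_{\wp \in P} \C^1$ is a compact abelian group and finite linear combinations of continuous characters are dense in continuous functions on $T_P$ by Stone--Weierstrass, it suffices to verify
$$\text{Avg}_\chi \, \eta(\chi_P) \;=\; \int_{T_P} \eta(t_P) \, d^* t_P$$
for every continuous character $\eta$ of $T_P$. The trivial character gives $1 = 1$ automatically, so I would focus on the case of a nontrivial $\eta$, whose right-hand side vanishes by orthogonality on $T_P$.

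Every continuous character of $T_P$ has the shape $\eta((t_\wp)_\wp) = \prod_{\wp \in P} t_\wp^{n_\wp}$ for some nonzero $(n_\wp) \in \Z^P$. Writing $\mathfrak{a} := \prod_{\wp \in P} \wp^{n_\wp}$, a nontrivial fractional ideal of $K$ supported on $P$ and hence coprime to every admissible $\mathbf{f}_\chi \notin P$, I would rewrite
$$\eta(\chi_P) \;=\; \prod_{\wp \in P} \chi(\wp)^{n_\wp} \;=\; \chi(\mathfrak{a}).$$
The lemma therefore reduces to proving $\text{Avg}_\chi \chi(\mathfrak{a}) = 0$ for every such nontrivial $\mathfrak{a}$.

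For the inner average, I would fix a prime conductor $\mathbf{f}$ coprime to $\mathfrak{a}$. The characters $\chi$ with $\mathbf{f}_\chi = \mathbf{f}$ and $\chi(\wp_\infty)=1$ are exactly the nontrivial characters of the ray class group $G_\mathbf{f}$ attached to $\mathbf{f}$ with the chosen archimedean condition. By orthogonality on the finite abelian group $G_\mathbf{f}$,
$$\frac{1}{|G_\mathbf{f}| - 1} \sum_{\mathbf{f}_\chi = \mathbf{f}} \chi(\mathfrak{a}) \;=\; \begin{cases} 1 & \text{if } [\mathfrak{a}] = 1 \in G_\mathbf{f},\\[2pt] -\dfrac{1}{|G_\mathbf{f}| - 1} & \text{otherwise.}\end{cases}$$
Since $|G_\mathbf{f}| \to \infty$ as $N(\mathbf{f}) \to \infty$ in our setting, the second case contributes $o(1)$.

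The final ingredient is a finiteness statement: the exceptional case $[\mathfrak{a}] = 1 \in G_\mathbf{f}$ occurs for only finitely many $\mathbf{f}$. If $[\mathfrak{a}]$ is nontrivial in the ideal class group of $K$, it never occurs. If $\mathfrak{a} = (\alpha_0)$ is principal with fixed generator, then triviality in $G_\mathbf{f}$ forces $u \alpha_0 \equiv 1 \pmod{\mathbf{f}}$ for some unit $u \in \calO_K^\times$; since $\mathfrak{a}$ is nontrivial each $u\alpha_0 - 1$ is a nonzero element of $K$, divisible by only finitely many prime ideals, and $\calO_K^\times$ is finite ($K = \Q$ or imaginary quadratic). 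Combining this with the outer normalization $\#\{\mathbf{f} : N(\mathbf{f}) \leq m\} \to \infty$ (prime ideal theorem for $K$), the exceptional conductors contribute $O(1)/\#\{\mathbf{f} : N(\mathbf{f}) \leq m\} \to 0$ and the remaining conductors contribute a Cesàro average of an $o(1)$ sequence, also tending to $0$. The main technical obstacle is the ray-class-group bookkeeping, in particular translating ``$\chi(\wp_\infty) = 1$'' into a precise trivialization condition at the archimedean place so that $|G_\mathbf{f}|$ is correctly identified and shown to grow; the Weyl-plus-orthogonality skeleton itself is routine.
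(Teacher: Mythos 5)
Your proof is correct, but note that the paper does not actually prove this lemma: it simply cites Ihara's Lemma 4.3.1 from \cite{iharaM}, and your Weyl-criterion argument (reduce to monomial characters $\prod_\wp t_\wp^{n_\wp}$, identify $\eta(\chi_P)=\chi(\mathfrak{a})$, apply orthogonality on the ray class group, and show the exceptional conductors with $[\mathfrak{a}]=1$ in $G_{\mathbf{f}}$ are finite in number using finiteness of $\calO_K^\times$) is essentially the proof given there. The only points left implicit are routine: the uniform bound $|\mathrm{Avg}_{N(\mathbf{f})\le m}\Psi|\le\|\Psi\|_\infty$ needed to pass from the dense span of characters to all continuous $\Psi$, and the handful of small conductors for which no primitive character with $\chi(\wp_\infty)=1$ exists.
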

\begin{proof}
	Let $Z_P = \prod_{\wp \in P}\Z$. For $n=(n_\wp)_{\wp \in P}\in Z_P$ and $t_P=(t_\wp)_{\wp \in P}\in T_P$, write
	$$t_P^n=\prod_{\wp \in P}t_\wp^{n_\wp}.$$
	By Weyl's criterion for the compact torus $T_P$, it is enough to prove that
	\begin{equation}\label{weyl-uniformchi}
		\text{Avg}_\chi \; \chi_P^n=0,
	\end{equation}
	for every non-zero $n\in Z_P$, where $\chi_P^n=\prod_{\wp \in P}\chi(\wp)^{n_\wp}$. Fix such an $n$ and put
	$$D_n=\prod_{\wp \in P}\wp^{n_\wp}.$$
	Then $D_n$ is a non-trivial fractional divisor supported on $P$, and $\chi_P^n=\chi(D_n)$ for every $\chi$ whose conductor is prime to $P$.
	
	Let $\mathbf{f}$ be a non-archimedean prime divisor not belonging to $P$. Averaging first over all characters whose conductor divides $\mathbf{f}$, the orthogonality relation gives
	\[
	T_{\mathbf{f}}:=
	\frac{\sum_{\mathbf{f}_\chi \mid \mathbf{f}}\chi(D_n)}
	{\sum_{\mathbf{f}_\chi \mid \mathbf{f}}1}
	=
	\begin{cases}
		1, & \text{if }D_n\equiv 1 \pmod{\mathbf{f}},\\
		0, & \text{otherwise}
	\end{cases}.
	\]
	Here $D_n\equiv 1 \pmod{\mathbf{f}}$ means that $D_n$ lies in the common kernel of all characters whose conductors divide $\mathbf{f}$. Since $D_n\neq (1)$ and the unit group of $K$ is finite, this congruence can hold for only finitely many prime divisors $\mathbf{f}$. Indeed, otherwise a fixed generator of $D_n$, up to multiplication by one of finitely many units, would be congruent to $1$ modulo infinitely many primes. This would force $D_n=(1)$.
	
	Now average over characters of conductor equal to $\mathbf{f}$:
	\[
	T'_{\mathbf{f}}:=
	\frac{\sum_{\mathbf{f}_\chi=\mathbf{f}}\chi(D_n)}
	{\sum_{\mathbf{f}_\chi=\mathbf{f}}1}.
	\]
	For prime $\mathbf{f}$, passing from conductor dividing $\mathbf{f}$ to conductor equal to $\mathbf{f}$ removes only the characters of smaller conductor, whose number is bounded in terms of $K$, while the number of characters modulo $\mathbf{f}$ is $\gg N\mathbf{f}$. Hence
	\[
	|T'_{\mathbf{f}}-T_{\mathbf{f}}|\ll (N\mathbf{f})^{-1},
	\]
	with an implied constant depending only on $K$.
	
	Using the definition of $\text{Avg}_\chi$ and ignoring the finitely many primes in $P$, which does not affect the limit, we obtain
	\[
	\text{Avg}_{N(\mathbf{f})\leq m}\chi(D_n)
	=
	\frac{\sum_{N(\mathbf{f})\leq m}T'_{\mathbf{f}}}
	{\sum_{N(\mathbf{f})\leq m}1}.
	\]
	The contribution of the $T_{\mathbf{f}}$ is supported on a finite set of prime divisors, and
	\[
	\frac{\sum_{N(\mathbf{f})\leq m}(N\mathbf{f})^{-1}}
	{\sum_{N(\mathbf{f})\leq m}1}\longrightarrow 0,
	\]
	by the prime ideal theorem. Therefore the preceding average tends to $0$ as $m\rightarrow \infty$, proving \eqref{weyl-uniformchi}. Weyl's criterion now gives the asserted uniform distribution.
\end{proof}
\begin{rem}
	The above lemma is the key ingredient of our results. The idea is to make a suitable change of variables in the above lemma, so that from the Jacobian a density function can be extracted.
\end{rem}
For any character $\chi$ of $K$ that is unramified at all primes in $P$, let
\[
L_P(\chi, s) = \prod_{\wp \in P} \; (1 - \chi(\wp) \; N \wp^{-s})^{-1}.
\]
Then
\[
\mathcal{L}_P(\chi, s) := \dfrac{L_P'(\chi,s)}{L_P(\chi, s)} = \sum_{\wp \in P} \;  - \dfrac{\chi(\wp) N \wp^{-s} \log N\wp }{ (1 - \chi(\wp) \; N \wp^{-s})};
\]
hence
\[
\mathcal{L}_P'(\chi, s) = \dfrac{d}{ds} \dfrac{L_P'(\chi,s)}{L_P(\chi, s)} = \sum_{\wp \in P} \; \dfrac{\chi(\wp) N\wp^{-s} (\log N\wp)^2}{(1 - \chi(\wp) N \wp^{-s})^2}.
\]
Looking at this, we define $g_{\sigma, P} : T_P \rightarrow \C$ by \begin{equation}\label{gsigmap}
	g_{\sigma, P} (t_P)= \sum_{\wp \in P} g_{\sigma, \wp}(t_{\wp}) \;\; \text{ where } \;\; g_{\sigma, \wp}(t_{\wp}) = \dfrac{ t_{\wp} N\wp^{\sigma} (\log N\wp)^2 }{(t_\wp - N\wp^\sigma)^2},
\end{equation}
where $t_P = (t_\wp)_{\wp \in P}$; in particular, $|t_\wp| = 1$. Thus we see that 
$$\mathcal{L}_P'(\chi, s) = g_{\sigma, P} (\chi_P NP^{-it}),$$
where $t = \operatorname{Im}(s)$ and $\chi_PNP^{-it} = (\chi(\wp) N\wp^{-it})_{\wp \in P}$.

For $(\mathbf{f}_\chi , P) = 1$, since $\{ \chi_P \}_\chi$ is uniformly distributed on $T_P$, so is its translate $\{\chi_PNP^{-it} \}_\chi$. Thus for any continuous function $\Phi$ on $\C$, by Lemma \ref{uniformchi}, applied to $\Psi = \Phi \; \circ \; g_{\sigma, P}$, we obtain
\begin{equation} \label{LPeqn}
	\text{Avg}_\chi \left( \Phi \left( \mathcal{L}_P' (\chi, s) \right)\right) = \int_{T_P} \Phi(g_{\sigma, P}(t_P)) \; d^*t_P.
\end{equation}
\vspace{2mm}\par We first note the following.

\begin{lm} \label{LP}
	For fixed $s$, with $\sigma = \operatorname{Re}(s)>1$, and for $P = P_y = \{ \wp \; : \; N\wp \leq y\}$, as $y \rightarrow \infty$, $\mathcal{L}_P'(\chi, s)$ tends uniformly to $\mathcal{L}'(\chi, s).$
\end{lm}
\begin{proof}
	For any $\chi$ we have,
	$$| \mathcal{L}'(\chi, s) - \mathcal{L}_P'(\chi, s) | \; \leq \; \sum_{\wp \notin P} \dfrac{N\wp^{\sigma}(\log N\wp)^2}{(N\wp^\sigma - 1)^2}.$$
	Thus letting $y \rightarrow \infty$, the right-hand side tends to $0$.
\end{proof}
\vspace{2mm}
\begin{tm} \label{Msigmap}
	Let $\sigma > 0$. Then there exists a function $M_{\sigma, P} : \C \rightarrow \R$ such that, for any continuous function $\Phi(w)$ on $\C$,
	$$ \int_\C M_{\sigma, P}(w) \Phi(w) |dw| \; = \; \int_{T_P} \Phi(g_{\sigma, P}(t_P)) \; d^*t_P,$$
	where $w = x+iy$ and $|dw| = (2 \pi)^{-1} dx dy$, and $d^* t_P$ is the normalized Haar measure on $T_P$. The function $M_{\sigma, P}$ is compactly supported and satisfies the following properties:
	\begin{enumerate}
		\item $ M_{\sigma, P} (w) \geq 0,$ 
		\item $ M_{\sigma, P}(\overline{w}) = M_{\sigma, P}(w), $
		\item $ \displaystyle \int_{\C} M_{\sigma, P}(w) \; |dw| = 1.$
	\end{enumerate}
\end{tm}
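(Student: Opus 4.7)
The natural strategy is to \emph{define} $M_{\sigma,P}$ as the Radon--Nikodym density, with respect to Lebesgue measure $|dw|$ on $\C$, of the pushforward under $g_{\sigma,P}$ of the normalized Haar measure $d^*t_P$ on $T_P$. Once this pushforward is shown to be absolutely continuous, the displayed integral identity is just the change-of-variables formula applied to $\Psi=\Phi\circ g_{\sigma,P}$, and compact support of $M_{\sigma,P}$ is automatic: for $\sigma>0$ and $N\wp\ge 2$ we have $|t_\wp-N\wp^\sigma|\ge N\wp^\sigma-1>0$ uniformly on the unit circle, so each $g_{\sigma,\wp}$ is continuous and bounded on $\C^1$, and $g_{\sigma,P}(T_P)$ is a compact subset of $\C$.

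The crux is therefore absolute continuity. My preferred route is Fourier analytic: because $T_P=\prod_\wp \C^1$ is a product and $g_{\sigma,P}=\sum_\wp g_{\sigma,\wp}$ is the corresponding sum, the additive-character transform of the pushforward factors as
\[
\widetilde{M}_{\sigma,P}(z)\;=\;\int_{T_P}\psi_z\!\bigl(g_{\sigma,P}(t_P)\bigr)\,d^*t_P\;=\;\prod_{\wp\in P}\int_{\C^1}\psi_z\!\bigl(g_{\sigma,\wp}(t_\wp)\bigr)\,d^*t_\wp.
\]
Each single-prime factor is an oscillatory integral over the unit circle with a real-analytic phase; stationary phase gives polynomial decay in $|z|$. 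Taking the product over at least two primes in $P$, the decay is strong enough to put $\widetilde{M}_{\sigma,P}$ in $L^1(\C)$, and Fourier inversion recovers $M_{\sigma,P}$ as a genuine continuous function. An equivalent real-variable route is to partition $T_P$ into pieces on which $g_{\sigma,P}$ is a local diffeomorphism and write $M_{\sigma,P}(w)=\sum_{g_{\sigma,P}(t_P)=w}|\det Dg_{\sigma,P}(t_P)|^{-1}$, with the critical locus contributing measure zero.

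The three listed properties then drop out. Property $(1)$, $M_{\sigma,P}\ge 0$, is automatic, as a pushforward of a nonnegative measure is a nonnegative measure. Property $(3)$, $\int M_{\sigma,P}(w)\,|dw|=1$, follows by inserting $\Phi\equiv 1$ into the defining identity and using that $d^*t_P$ is a probability measure. Property $(2)$, the conjugation symmetry $M_{\sigma,P}(\bar w)=M_{\sigma,P}(w)$, follows from the elementary identity $\overline{g_{\sigma,\wp}(t)}=g_{\sigma,\wp}(\bar t)$ (valid because $N\wp^\sigma\in\R$), together with invariance of Haar measure under $t\mapsto\bar t$: the resulting involution on $T_P$ intertwines $g_{\sigma,P}$ with complex conjugation on $\C$, so the pushforward is invariant under $w\mapsto\bar w$. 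The step I expect to be most delicate is the absolute-continuity / density-existence step itself, since a single-prime pushforward is supported on a $1$-dimensional real-analytic curve; one must use genuine nondegeneracy of the combined map (at a minimum $|P|\ge 2$ with transverse tangent directions of the individual curves) to upgrade the sum to an honest $2$-dimensional density, and this transversality is precisely what validates either the Fourier-decay or the Jacobian approach.
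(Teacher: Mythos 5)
Your strategy---define $M_{\sigma,P}$ as the Radon--Nikodym density of the pushforward of $d^*t_P$ under $g_{\sigma,P}$, and try to establish absolute continuity via Fourier decay---is genuinely different from the paper's. The paper instead treats a single prime explicitly: it parametrizes the unit disc by $z=re^{i\theta}$, considers $w(z)=A\,re^{i\theta}/(1-re^{i\theta})^2$ with $A=(\log N\wp)^2$, computes the Jacobian $J=A^2 r\,|1+re^{i\theta}|^2/|1-re^{i\theta}|^6$ of $(r,\theta)\mapsto(U,V)$, and then \emph{defines} $M_{\sigma,\wp}(w)=J^{-1}\,\delta(r-N\wp^{-\sigma})$; the general $M_{\sigma,P}$ is then built by convolution over $\wp\in P$. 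Your observation that the single-prime pushforward is a curve-supported singular measure is exactly right, and it is consistent with the paper's construction: $M_{\sigma,\wp}$ visibly contains a Dirac factor, so for $|P|=1$ the object being called a ``function $\C\to\R$'' is really a distribution. Your proposal cannot produce an honest density in that case either, and since the theorem is claimed for all finite $P$ (including singletons), neither route proves the statement as literally written without reinterpreting $M_{\sigma,P}$ as a measure or distribution.

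There is also a quantitative gap in the Fourier step. Each single-prime factor $\int_{\C^1}\psi_z(g_{\sigma,\wp}(t))\,d^*t$ is a one-dimensional oscillatory integral whose stationary-phase decay (assuming nondegenerate critical points, which you do not verify) is $O(|z|^{-1/2})$. The product over $P$ then decays like $|z|^{-|P|/2}$, and $\int_{\C}(1+|z|)^{-|P|/2}\,|dz|$ converges only when $|P|/2>2$, i.e. $|P|\ge 5$. So ``two primes'' is not enough to place $\widetilde M_{\sigma,P}$ in $L^1(\C)$ and deduce a continuous density by inversion; and even absolute continuity for $|P|=2,3,4$ would need a separate argument. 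The paper's convolution formulation sidesteps this entirely: the integral identity $\int_\C M_{\sigma,P}\Phi\,|dw|=\int_{T_P}\Phi\circ g_{\sigma,P}\,d^*t_P$ is immediate from the singleton change of variables plus Fubini, at the level of measures, for every finite $P$, with no $L^1$ requirement on the characteristic function. If you want to keep the Fourier route, you should (i) restrict the continuity claim to $|P|$ large, (ii) carry out the stationary-phase nondegeneracy check for the specific phase $\theta\mapsto\mathrm{Re}(\bar z\,g_{\sigma,\wp}(e^{i\theta}))$, and (iii) prove the integral identity separately for small $|P|$ in the sense of measures. Your arguments for properties $(1)$, $(2)$, $(3)$ are fine; in particular the symmetry in $(2)$ from $\overline{g_{\sigma,\wp}(t)}=g_{\sigma,\wp}(\bar t)$ and Haar-invariance of $t\mapsto\bar t$ is cleaner than what the paper records.
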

\begin{proof}
	We first consider the case when $|P|=1$, say $P = \{ \wp \}$. Let $T_\wp = \C^1$ and write $t_\wp = e^{i \theta}$, so $d^* t_\wp = \frac{1}{2 \pi} d \theta$.
	
	In the open unit disc, write $z = r e^{i \theta}$ for $0 \leq r < 1$ and $0 \leq \theta < 2 \pi$, and consider the map $$w = w(z) = \dfrac{ (\log  N\wp)^2 \; r e^{i \theta} }{(1- re^{i \theta})^2} = \dfrac{ A \; r e^{i \theta} }{(1- re^{i \theta})^2}.$$ 
	
	For brevity, write $A = (\log  N\wp)^2$. Let $\rho$ be a real number such that $ N \wp^{- \sigma} < \rho < 1$ and let $B_\rho$ be the region bounded by the curve $$w =  \frac{ A \; \rho e^{i \theta} }{(1- \rho e^{i \theta})^2}.$$
	Thus $w = w(z)$ gives a one-to-one correspondence between the region $B_{\sigma, \wp}$ and the disc $C_{\rho} := \{z \; : \; |z| < \rho \}$.
	
	Let us now compute the Jacobian of this mapping. We see that, 
	\begin{equation}\label{w(z)}
		w(z) = A \; \dfrac{r \cos \theta - 2 r^2 + r^3 \cos \theta}{ |1 - re^{i \theta}|^4} + i \; A \; \dfrac{r \sin \theta - r^3 \sin \theta}{| 1 - r e^{i \theta }|^4} = U + iV \;\;\;\; \text{(say)}.
	\end{equation} 
	Thus the Jacobian is given by $$ J \; = \; \begin{vmatrix}
		\frac{\partial U}{\partial r} & \frac{\partial U}{\partial \theta} \\[0.5em]
		\frac{\partial V}{\partial r} & \frac{\partial V}{\partial \theta} 
	\end{vmatrix} \;  = \; \dfrac{A^2 \; r \; |1+re^{i\theta}|^2}{|1-re^{i \theta} |^6}.
	$$
	{ \emph{This Jacobian was computed using a computer algebra system.} \vspace{2mm} }	
	Thus we have 
	\begin{align*}
		\int_{T_{\wp}} \Phi(g_{\sigma, \wp} (t_{\wp})) \; d^*t_{\wp} \; &= \dfrac{1}{2 \pi} \int_{0}^{2 \pi } \Phi \left( \dfrac{ e^{i \theta} N\wp^{\sigma} (\log N\wp)^2 }{( e^{i \theta} - N\wp^\sigma)^2} \right) \; d \theta \\[1.3ex]
		& = \dfrac{1}{2 \pi} \int_{0}^{2 \pi } \Phi \left( \dfrac{ e^{i \theta} N\wp^{ - \sigma} (\log N\wp)^2 }{( 1 -  N\wp^{ -\sigma}  e^{i \theta} )^2} \right) \; d \theta \\[1.3ex]
		& = \dfrac{1}{2 \pi} \int \int_{B_{\sigma, \wp}} \Phi(w) \delta(r - N \wp^{- \sigma}) \; J^{-1} \; dU \; dV,
	\end{align*}
	where $\delta(\cdot)$ denotes the Dirac delta distribution and $w = U + iV$. Therefore we define $M_{\sigma, \wp}(w)$ in the following way: 
	\begin{equation}
		M_{\sigma, \wp}(w) = J^{-1} \delta(r - N \wp^{- \sigma}) = \dfrac{|1 - r e^{i \theta}|^6}{ (\log N\wp)^2 \; |1 + r e^{i \theta}|^2} \; \dfrac{\delta(r - N\wp^{- \sigma})}{r},
	\end{equation}
	for $w \in B_{\sigma, \wp}$ and $M_{\sigma, \wp}(w) = 0$ otherwise. Substituting this expression, we obtain
	$$  \int_{T_\wp} \Phi(g_{\sigma, \wp}(t_\wp)) \; d^*t_\wp \; = \; \int_\C M_{\sigma, \wp}(\omega) \; \Phi(\omega) \; |d\omega|.$$
	This proves the case $P = \{ \wp \}$. For the general case, we define the function using the convolution product. That is,
	$$M_{\sigma, P}(w) = \ast_{\wp \in P} \; M_{\sigma, \wp}(w).$$
	In other words, for $P = P' \cup \{ \wp\}$ define 
	\begin{equation}
		M_{\sigma, P}(w) = \int_{\C} M_{\sigma, P'}(w') \; M_{\sigma, \wp}(w - w') \; |dw'|.
	\end{equation}
	Note that, for any open set $U \subseteq \C$ we obtain, 
	\begin{equation}
		\int_{U} M_{\sigma, P}(w) \; |dw| = \; \text{Vol}(g_{\sigma, P}^{-1} (U)),
	\end{equation}
	where the volume is with respect to $d^*t_P$ and thus $\int_{\C} M_{\sigma, P}(w) \; |dw| \; = 1$. The Haar measure is normalized, i.e., the total volume of $T_P$ is $1$.
\end{proof}
\vspace{3mm}
Our next goal is to show that, for $P = P_y = \{ \wp \; : \; N\wp \leq y\}$ as before, $M_{\sigma, P_y}(w)$ converges uniformly in $w$ to a function $M_{\sigma}(w)$ as $y \rightarrow \infty$. \vspace{2mm}

\begin{pr}\label{Msigmauni}
	For $P=P_y$, as $y \rightarrow \infty$ and for $\sigma > 1/2$, ${M}_{\sigma, P}(w)$ converges to $M_{\sigma}(w)$ uniformly in $w$. The limit $M_{\sigma}(w)$ is therefore continuous in $w$ and non-negative.
\end{pr}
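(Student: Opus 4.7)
The plan is to pass to Fourier transforms, exploiting the fact that the convolution definition of $M_{\sigma, P}$ becomes a product on the Fourier side, and then recover uniform convergence of $M_{\sigma, P_y}(w)$ via Fourier inversion. First I would set $\tilde{M}_{\sigma, P}(z) := \int_{\C} M_{\sigma, P}(w)\, \psi_z(w)\, |dw|$; the convolutional definition of $M_{\sigma, P}$ immediately yields $\tilde{M}_{\sigma, P}(z) = \prod_{\wp \in P} \tilde{M}_{\sigma, \wp}(z)$, and Theorem \ref{Msigmap} applied with $\Phi = \psi_z$ gives the compact formula
\[
\tilde{M}_{\sigma, \wp}(z) \;=\; \frac{1}{2\pi}\int_0^{2\pi} \psi_z\!\left(g_{\sigma, \wp}(e^{i\theta})\right) d\theta.
\]

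Next, the expansion $g_{\sigma, \wp}(e^{i\theta}) = (\log N\wp)^2 \sum_{n\geq 1} n\, (e^{i\theta}/N\wp^\sigma)^n$ shows that $\int g_{\sigma, \wp}\, d^*\theta = 0$ and $\int |g_{\sigma, \wp}|^2\, d^*\theta = (\log N\wp)^4 N\wp^{-2\sigma}\bigl(1+O(N\wp^{-2\sigma})\bigr)$. Taylor expanding $\psi_z$ to second order (using also that $\int g_{\sigma, \wp}^{\,2}\, d^*\theta = 0$ for orthogonality reasons) then produces
\[
\tilde{M}_{\sigma, \wp}(z) \;=\; 1 \;-\; c_\wp |z|^2 \;+\; O\!\left((\log N\wp)^6 N\wp^{-3\sigma}|z|^3\right), \qquad c_\wp \asymp (\log N\wp)^4 N\wp^{-2\sigma}.
\]
Since $\sum_{\wp} (\log N\wp)^4 N\wp^{-2\sigma}$ converges for $\sigma > 1/2$, the infinite product $\prod_{\wp \in P_y} \tilde{M}_{\sigma, \wp}(z)$ converges absolutely and uniformly on compact subsets of $\C$ as $y \to \infty$, to a continuous limit $\tilde{M}_\sigma(z)$.

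The main obstacle is controlling $\tilde{M}_\sigma(z)$ as $|z| \to \infty$, which is what lets one pass the uniform statement through the Fourier inversion. Each individual factor only satisfies $|\tilde{M}_{\sigma, \wp}(z)| \leq 1$, so the strategy would be to identify, for each large $|z|$, a quantitative family of primes $\wp$ for which $|\tilde{M}_{\sigma, \wp}(z)| \leq 1 - \delta$ with $\delta$ independent of $\wp$. Stationary-phase analysis, or explicit Bessel-type asymptotics, of the oscillatory integral defining $\tilde{M}_{\sigma, \wp}(z)$, applied to primes for which $(\log N\wp)^2 N\wp^{-\sigma}|z|$ is of moderate size, should produce such a gap; the prime number theorem then guarantees there are enough such primes to give effectively super-polynomial decay of the product, so that $\tilde{M}_\sigma \in L^1(\C)$.

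Finally, I would define $M_\sigma(w) := \int_{\C} \tilde{M}_\sigma(z)\, \psi_{-z}(w)\, |dz|$; continuity of $M_\sigma$ follows from dominated convergence and non-negativity from the fact that it is the pointwise limit of the non-negative functions $M_{\sigma, P_y}$ produced in Theorem \ref{Msigmap}. Writing
\[
M_{\sigma, P_y}(w) - M_\sigma(w) \;=\; \int_{\C} \big(\tilde{M}_{\sigma, P_y}(z) - \tilde{M}_\sigma(z)\big)\, \psi_{-z}(w)\, |dz|
\]
and using $|\psi_{-z}(w)| = 1$ together with dominated convergence, with an integrable majorant supplied by the decay estimates of the previous paragraph, yields uniform convergence $M_{\sigma, P_y} \to M_\sigma$ in $w$. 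The hard part of the plan is unquestionably the large-$|z|$ decay; once a bound of the form $|\tilde{M}_\sigma(z)| \leq C(1+|z|)^{-N}$ with $N$ large enough is in hand, every remaining step is a routine application of Fourier inversion and dominated convergence.
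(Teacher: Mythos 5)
Your proposal takes a genuinely different route from the paper. You pass to the Fourier side and try to exploit the factorization $\tilde{M}_{\sigma, P}(z) = \prod_{\wp \in P} \tilde{M}_{\sigma, \wp}(z)$, establish a quadratic Taylor expansion $\tilde{M}_{\sigma, \wp}(z) = 1 - c_\wp|z|^2 + O(\cdot)$ to get local uniform convergence of the infinite product, then attempt to prove $\tilde{M}_\sigma \in L^1$ so that Fourier inversion recovers $M_\sigma$ and uniform convergence in $w$. This is essentially the strategy of Ihara's original $\tilde{M}_\sigma$ theory, and your small-$|z|$ computations are correct (the vanishing of $\int g_{\sigma,\wp}\,d^*\theta$ and $\int g_{\sigma,\wp}^2\,d^*\theta$, and the estimate $c_\wp \asymp (\log N\wp)^4 N\wp^{-2\sigma}$). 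The paper, by contrast, works entirely on the spatial side: it uses the single-prime convolution step $M_{\sigma, P\cup\{\wp\}}(w) = \int_{T_\wp} M_{\sigma,P}\big(w - g_{\sigma,\wp}(t_\wp)\big)\,d^*t_\wp$ together with the uniform boundedness of $M_{\sigma,P}$ (which follows from the explicit formula with the Dirac delta and normalization) to estimate the sup-norm increment directly, obtaining $\|M_{\sigma, P\cup\{\wp\}} - M_{\sigma,P}\|_\infty \ll N\wp^{-4\sigma}$ and then summing over $\wp$; this is a far more elementary Cauchy-in-sup-norm argument, and it buys uniform convergence without any Fourier machinery or any control of large $|z|$.

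The genuine gap in your plan is exactly the step you flag as ``the hard part'': the decay of $\tilde{M}_\sigma(z)$ as $|z| \to \infty$. You sketch a stationary-phase/Bessel strategy for producing a uniform gap $|\tilde{M}_{\sigma,\wp}(z)| \le 1-\delta$ for a positive-density family of primes with $(\log N\wp)^2 N\wp^{-\sigma}|z| \asymp 1$, but you do not carry out the phase analysis, nor do you verify that the resulting product over those primes decays fast enough (say polynomially of arbitrarily high order) to put $\tilde{M}_\sigma$ in $L^1$. Without that, the definition $M_\sigma(w) := \int_\C \tilde{M}_\sigma(z)\psi_{-z}(w)\,|dz|$ is not yet justified, and the final dominated-convergence step lacks its integrable majorant. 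Until that estimate is supplied, the plan does not close; and it is worth noting that the paper's direct sup-norm bound on the one-prime increment avoids the large-frequency problem entirely and reaches the conclusion in a few lines.
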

\begin{proof}
	For $\wp \notin P$, writing $N\wp^{-\sigma} = q$, we have
	\begin{align*}
		|M_{\sigma, P \cup \{ \wp\}} (w) - M_{\sigma, P}(w)| & = \left| \frac{1}{2 \pi} \frac{1}{(\log N\wp)^2}\int_0^{2 \pi} \frac{|1-q e^{i \theta}|^6 }{|1+q e^{i \theta} |^2} M_{\sigma, P}(z - q e^{i \theta}) \; d\theta \right| \\[1.3ex]
		& \ll \frac{q^4}{(\log N \wp)^2} \ll \left( \dfrac{1}{ N\wp^{\sigma}}\right)^4.
	\end{align*}
	Note that, by (1) and (3) of Theorem \ref{Msigmap}, $M_{\sigma, P}$ is bounded. Thus, ${M}_{\sigma, P}(w)$ converges uniformly to a function, say $M_{\sigma}(w)$, for $\sigma > 1/2$ (in fact, for $\sigma > 1/4$). 
\end{proof}
\begin{rem}
	We also have $\int_{\C}M_{\sigma}(w)|dw| = 1$. However, we will show this after proving the next theorem. Note that, since $\int_{\C}M_{\sigma, P}(w)|dw| = 1$ for all $P$, the uniform convergence already gives, $$\int_{\C} M_{\sigma} (w)|dw| \leq 1.$$
\end{rem}
\begin{tm}\label{dist1}
	For any $s \in \C$ with $\sigma = \operatorname{Re}(s) > 1$ 
	\begin{equation}
		\text{Avg}_\chi \; \Phi(\mathcal{L}'(\chi, s)) \; = \; \int_\C M_{\sigma}(w) \; \Phi(w) \; |dw|,
	\end{equation}
	holds for any continuous function $\Phi$ of $\C$.
\end{tm}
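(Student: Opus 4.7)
The plan is to insert the partial-sum approximation $\mathcal{L}_P'(\chi,s)$ as an intermediary: use Lemma \ref{LP} on the average side, convert the resulting $\text{Avg}_\chi\,\Phi(\mathcal{L}_P'(\chi,s))$ to $\int_\C M_{\sigma,P}(w)\Phi(w)\,|dw|$ via equation (2) together with Theorem \ref{Msigmap}, and finally let $P=P_y$ with $y\to\infty$ using Proposition \ref{Msigmauni}. This is a standard three-step $\varepsilon$-argument whose only subtlety is a uniform support bound.

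The preliminary step is to exploit $\sigma>1$: absolute convergence gives
\[
|\mathcal{L}'(\chi,s)| \leq R := \sum_{\wp} \frac{N\wp^{-\sigma}(\log N\wp)^2}{(1-N\wp^{-\sigma})^2} < \infty
\]
uniformly in $\chi$, and the same bound controls $|g_{\sigma,P}(t_P)|$ for all $P$ and $t_P$. Hence all values touched by either side of the identity lie in a fixed closed disc $\overline{D}_R\subset\C$, and because $M_{\sigma,P}$ is the pushforward of the Haar measure on $T_P$ by $g_{\sigma,P}$, its support is contained in $\overline{D}_R$ uniformly in $P$. Restricting $\Phi$ to a compact neighborhood of $\overline{D}_R$, I may treat it as a uniformly continuous bounded function.

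Given $\varepsilon>0$, pick $\delta$ so that $|u-v|<\delta$ forces $|\Phi(u)-\Phi(v)|<\varepsilon/2$ on this neighborhood. Lemma \ref{LP} produces $y_0$ such that for all $y\geq y_0$ one has $|\mathcal{L}'(\chi,s)-\mathcal{L}_P'(\chi,s)|<\delta$ uniformly in $\chi$; the finitely many $\chi$ with $\mathbf{f}_\chi\in P$ contribute nothing to the asymptotic average, so applying $\text{Avg}_\chi$ and invoking equation (2) with Theorem \ref{Msigmap} yields
\[
\Bigl|\text{Avg}_\chi\,\Phi(\mathcal{L}'(\chi,s)) - \int_\C M_{\sigma,P}(w)\,\Phi(w)\,|dw|\Bigr| \leq \varepsilon/2.
\]
Since the $M_{\sigma,P_y}$ all live in the fixed compact set $\overline{D}_R$ (so the limit $M_\sigma$ does too) and $\Phi$ is bounded there, the uniform convergence $M_{\sigma,P_y}\to M_\sigma$ from Proposition \ref{Msigmauni} upgrades to convergence of the integrals against $\Phi$; for $y$ large enough the remaining discrepancy is at most $\varepsilon/2$. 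Letting $\varepsilon\to 0$ proves the identity, and taking $\Phi\equiv 1$ incidentally confirms $\int_\C M_\sigma(w)\,|dw|=1$.

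The main obstacle, though not deep, is the uniform support bound above: without it, the $L^\infty$-convergence $M_{\sigma,P_y}\to M_\sigma$ would not translate into convergence of integrals against a general continuous (and possibly unbounded) $\Phi$. This is precisely where the hypothesis $\sigma>1$ enters essentially; for $1/2<\sigma\leq 1$ the supports of $M_{\sigma,P_y}$ can blow up and the same strategy would require restricting the class of test functions, as in the Ihara--Matsumoto setting.
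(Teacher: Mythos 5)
Your proof is correct and follows the same route as the paper's: pass to the finite truncation $\mathcal{L}_P'$ using equation \ref{LPeqn} and Theorem \ref{Msigmap}, then let $P=P_y\to\infty$ via Lemma \ref{LP} and Proposition \ref{Msigmauni}. You merely make explicit two things the paper leaves compressed, namely the uniform compact-support bound coming from $\sigma>1$ (which the paper notes only afterward, in the remarks on extending to $\sigma\le 1$) and the $\varepsilon$-argument justifying the interchange of the $y\to\infty$ limit with $\text{Avg}_\chi$; both are exactly what the phrase ``taking the limit'' in the paper's proof is meant to cover.
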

\begin{proof}
	From \eqref{LPeqn} and Theorem \ref{Msigmap} we have, 
	\begin{align*}
		\text{Avg}_\chi \left( \Phi \left( \mathcal{L}_P' (\chi, s) \right)\right) &= \int_{T_P} \Phi(g_{\sigma, P}(t_P)) \; d^*t_P \\[1.3ex]
		&= \int_{\C} M_{\sigma, P}(w) \Phi(w) |dw|.
	\end{align*}
	The theorem follows by taking the limit, using Lemma \ref{LP} and Proposition \ref{Msigmauni}. 
\end{proof} 
\vspace{3mm} Note that if we take the particular case of $\Phi(w) = P^{(a,b)}(w) = w^a \overline{w}^b$, then the moment results give (see \cite[Theorem 1.2]{ghosh2})
$$ (-1)^{(1+1)(a+b)} {\mu}^{(a,b)}(1) = \int_{\C} M_{\sigma}(w) P^{(a,b)}(w) |dw|.$$
In particular, taking $a=b=0$ gives,
$$\int_{\C}M_{\sigma}(w) |dw| = {\mu}^{(0,0)}(1) = 1.$$

We also note that if we consider the Fourier dual of $M_{\sigma}(z)$ given by 
$$\tilde{M}_{\sigma}(z) = \int_{\C} M_{\sigma}(w) \psi_z(w)|dw|.$$
By Theorem \ref{dist1}, we have
$$\tilde{M}_{\sigma}(z) = \text{Avg}_\chi \; \psi_z(\mathcal{L}'(\chi, s)).$$

\section{Higher derivatives} \label{Disthigh}

The above technique can in principle be generalized to higher derivatives. It remains to choose the $g_{\sigma, \wp}(t_\wp)$ function appropriately so that, for a local factor, we obtain 
$$\mathcal{L}_\wp^{(n)}(\chi, s) = g_{\sigma, \wp}(\chi(\wp) N \wp^{-it}).$$ 
However, for higher derivatives, computing these $M_{\sigma, P}$ functions explicitly becomes very involved. Even in our case, we used a computer algebra system to simplify the Jacobian. 
Writing $u = \chi(\wp)N\wp^{-s}$ and hence $\frac{du}{ds} = -u \log N\wp$, we obtain
\[
\mathcal{L}_{\wp}(s, \chi) = \dfrac{- \chi(\wp) N\wp^{-s} \log N\wp}{(1-\chi(\wp)N\wp^{-s})} = \dfrac{ - (\log N\wp ) u}{1-u}.
\]
Define
\[
g(u) := \frac{u}{1-u}, \quad \text{so that } \mathcal{L}_{\wp}(s, \chi) = (-\log N\wp) \, g(u), \quad u' = -u \log N\wp, \quad u^{(r)} = u (- \log N\wp)^r.
\]
Faà di Bruno's formula gives
$$
\frac{d^r}{ds^r} g(u) = \sum_{k=1}^r g^{(k)}(u) \, B_{r,k}\big(u', u'', \dots, u^{(r-k+1)}\big),
$$
where $B_{r,k}$ denote the partial Bell polynomials. Here,
$$
g^{(k)}(u) = \frac{k!}{(1-u)^{k+1}}, \qquad \text{for } k \geq 1.
$$
Thus we have 
\begin{align*}
	\mathcal{L}^{(r)}_{\wp}(s, \chi) &= (-\log N\wp) \sum_{k=1}^r \frac{k!}{(1-u)^{k+1}} \, B_{r,k}\big(u', u'', \dots, u^{(r-k+1)}\big) \\
	&= (-\log N\wp) \sum_{k=1}^r \frac{k!}{(1-u)^{k+1}} \, B_{r,k}\big((-\log N\wp) u, (-\log N\wp)^2 u, \dots, (-\log N\wp)^{r-k+1} u \big) \\
	&= (-\log N\wp)^{r+1} \sum_{k=1}^r \frac{k!}{(1-u)^{k+1}} \, B_{r,k}(u, u, \dots, u) \\
	&= (-\log N\wp)^{r+1} \sum_{k=1}^r \frac{(k!)u^k}{(1-u)^{k+1}} \, B_{r,k}(1, 1, \dots, 1)\\
	&= (-\log N\wp)^{r+1} \sum_{k=1}^r \frac{u^k}{(1-u)^{k+1}} (k!)^2 S(r,k) \\ 
	&= (-\log N\wp)^{r+1} \sum_{k=1}^r \frac{(\chi(\wp)N\wp^{-s})^k}{(1-\chi(\wp)N\wp^{-s})^{k+1}} (k!)^2 S(r,k).
\end{align*}

Thus, as before, we can define $g_{\sigma, r, P} : T_P \rightarrow \C$ by $g_{\sigma, r, P} (t_P)= \sum_{\wp \in P} g_{\sigma, r, \wp}(t_{\wp})$ where 
\begin{equation}\label{gsimarp}
	g_{\sigma, m, \wp}(t_{\wp}) = (-\log N\wp)^{m+1} \sum_{k=1}^m \frac{ N\wp^{-\sigma k}t_{\wp}^k}{(1 - N\wp^{-\sigma}t_{\wp})^{k+1}} (k!)^2 S(m,k).
\end{equation}
We note that for $m=1$, $g_{\sigma, 1, \wp}(t_{\wp}) = g_{\sigma, \wp}(t_{\wp})$ agrees with \eqref{gsigmap}.

We further note that 
$$\mathcal{L}_P^{(r)}(\chi, s) = \sum_{\wp \in P} (-\log N\wp)^{r+1} \sum_{k=1}^r \frac{(\chi(\wp)N\wp^{-s})^k}{(1-\chi(\wp)N\wp^{-s})^{k+1}} (k!)^2 S(r,k) = g_{\sigma, r, P}(\chi_P NP^{-it}),$$
where, as before, $t = \operatorname{Im}(s)$ and $\chi_PNP^{-it} = (\chi(\wp) N\wp^{-it})_{\wp \in P}$. 
\begin{lm} \label{LP-high}
	For fixed $s$, with $\sigma = \operatorname{Re}(s)>1$, and for $P = P_y = \{ \wp \; : \; N\wp \leq y\}$, as $y \rightarrow \infty$, $\mathcal{L}_P^{(r)}(\chi, s)$ tends uniformly to $\mathcal{L}^{(r)}(\chi, s).$
\end{lm}
\begin{proof}
	For any fixed $r$ and $\chi$ we have,
	$$| \mathcal{L}^{(r)}(\chi, s) - \mathcal{L}_P^{(r)}(\chi, s) | \; \leq \; \sum_{\wp \notin P} (\log N\wp)^{r+1} \sum_{k=1}^r \frac{N\wp^{\sigma}}{(N\wp^{\sigma}-1)^{k+1}} (k!)^2 S(r,k).$$
	Thus, letting $y \rightarrow \infty$, the right-hand side tends to $0$, as $\sigma > 1$.
\end{proof}
\subsection{Obstruction to generalization to higher derivatives}
As before, we consider the case when $|P|=1$, say $P = \{ \wp \}$. Let $T_\wp = \C^1$ and write $t_\wp = e^{i \theta}$, so $d^* t_\wp = \frac{1}{2 \pi} d \theta$. To deduce the density function, ideally we would make a change of variables and be able to write
\begin{equation} \label{obs1}
	\int_{T_{\wp}} \Phi(g_{\sigma,m, \wp} (t_{\wp})) = \dfrac{1}{2 \pi} \int \int_{B_{\sigma,m, \wp}} \Phi(w_m) \delta(r - N \wp^{- \sigma}) \; J^{-1} \; dU \; dV ,
\end{equation}
where, in the open unit disc, $z = r e^{i \theta}$ for $0 \leq r < 1$ and $0 \leq \theta < 2 \pi$, we define $$w_m = w_m(z) = (-\log N\wp)^{m+1} \sum_{k=1}^m \frac{ (r e^{i\theta})^k}{(1-re^{i \theta})^{k+1}} (k!)^2 S(m,k).$$ 
If we write $w_m(z) = U + iV$, then the $J$ in \eqref{obs1} is the Jacobian given by $$ J \; = \; \begin{vmatrix}
	\frac{\partial U}{\partial r} & \frac{\partial U}{\partial \theta} \\[0.5em]
	\frac{\partial V}{\partial r} & \frac{\partial V}{\partial \theta} 
\end{vmatrix}.
$$
However, this change-of-variables argument fails, because for $m>1$, $w_m(z)$ is no longer a one-to-one map between the region $B_{\sigma, \wp}$ and the disc $C_{\rho} := \{z \; : \; |z| < \rho \}$. For larger values of $\sigma$, depending on $m$, it is still possible to derive density functions. For example, let us consider the case of $m=2$ in detail. Noting that $S(2,1) = 1 = S(2,2)$, we have
\begin{align*}
	w_2(z) &= -(\log N\wp)^3 \left( \dfrac{re^{i\theta}}{(1-r e^{i \theta})^2} + \dfrac{4r^2 e^{2i\theta}}{(1-r e^{i \theta})^3} \right)\\
	&= -(\log N\wp)^3 \left( \dfrac{z}{(1-z)^2} + \dfrac{4z^2}{(1-z)^3} \right)\\
	&= -(\log N\wp)^3 \dfrac{(z+3z^2)}{(1-z)^3}, \text{ hence}\\
	w_2'(z) &= \frac{1+8z+3z^2}{(1-z)^4}.
\end{align*}
The numerator of $w_2'(z)$ vanishes when $1+8z+3z^2=0$, for $z = (-4 \pm \sqrt{13})/3$. A zero of $w_2'(z)$ inside the disc implies local non-injectivity. To avoid this, we need to be able to choose $\rho$ such that $\rho < \big| (-4 + \sqrt{13})/3 \big| \approx 0.1315$. Since $N \wp^{-\sigma} < \rho$, we should have $\sigma > - \log_2 (0.1315) \approx 2.93$. We note that this is the general situation: the numerator of $w_m'(z)$ is a polynomial with a large leading coefficient. Consequently, it will always have a zero very close to the origin. Thus for higher derivatives, we can only obtain a density function for large values of $\sigma$. For large $m$, we expect that $\sigma$ needs to be polynomially large in $m$. Hence, this method fails to yield distribution results for $\sigma > 1$. 
\subsection{Remarks on extending the results to $\frac{1}{2} < \sigma \leq 1$}
We also note that for $\sigma >1$, the image of $g_{\sigma, P}$ remains bounded as $|P| \rightarrow \infty$. Since the support of $M_{\sigma, P}$ is the image of the mapping, $g_{\sigma, P}$, the support of $M_{\sigma}$ is also bounded. Therefore, in the proof of the above theorem we may assume $\Phi$ to be continuous.

This is no longer true for $\sigma > \frac{1}{2}$; that is, the image of $g_{\sigma, P}$ need not be bounded. As remarked earlier, in a later paper \cite{iharamatsu}, Ihara and Matsumoto extended their result to $\sigma > 1/2$ under GRH and some conditions on the test function. They introduced the idea of admissible functions and developed a more general notion of $g_{\sigma, \wp}$ which they called $g$-functions. However, this approach does not seem to generalize for higher derivatives. It fails at essential steps in Sections 3.1 and 3.3 of their paper. We have yet to discover a way of doing this, and this remains work in progress.
\section*{Acknowledgements}
The author sincerely thanks Prof. V. Kumar Murty for his valuable suggestions, encouragement and many insightful discussions during the developement of this work. The author is currently partially supported by NSERC grants RGPIN-2018-03770, RGPIN-2020-06075, and by CRC tier-2 research stipend 950-231716 at the University of Calgary. The author thanks Prof. Khoa D. Nguyen for their financial support and mentorship.

\bibliographystyle{amsplain}
\bibliography{EulerKron}
\end{document}